\newcommand{\dueto}[1]{\textup{\textbf{(#1) }}}
\newcommand{\nin}{\not\in}
\newcommand{\nocomma}{}
\newcommand{\tmtextit}[1]{{\itshape{#1}}}
\newtheorem{theorem}{Theorem}[section]
\newtheorem{lemma}[theorem]{Lemma}
\newtheorem{proposition}[theorem]{Proposition}
\theoremstyle{definition}
\newtheorem{definition}[theorem]{Definition}
\numberwithin{equation}{section}
\title{A new construction of the degree of maximal monotone maps}
\author{Mohammad Niksirat}
\address{Department of Mathematics, University of Alberta, Edmonton, Canada, T6G 2J5}
\email{niksirat@ualberta.ca}
\keywords{degree theory, finite rank approximation, maximal monotone maps, multivalued maps}
\subjclass[2010]{Primary 47H11, Secondary 47H07}
\begin{document}

\begin{abstract}
The inclusion equations of the type $f \in T ( x)$ where $T : X \to 2^{X^{\ast}}$ is a maximal monotone map, are extensively studied in nonlinear analysis. In this paper, we present a new construction of the degree of maximal monotone maps of the form $T:Y\to 2^{X^*}$, where $Y$ is a locally uniformly convex and separable Banach space continuously embedded in $X$.  The advantage of the new construction lies in the remarkable simplicity it offers for calculation of degree in comparison with the classical one suggested by F. Browder. We prove a few classical theorems in convex analysis through the suggested degree.
\end{abstract}
\maketitle

\section{Introduction}

Assume that $X$ is a uniformly convex Banach space, $Y$ is separable and reflexive Banach spaces equipped with
uniformly convex norms, and $i : Y \rightarrow X$ is the continuous embedding. Furthermore, assume that $T : Y \rightarrow 2^{X^{\ast}}$ is a maximal monotone
map with the effective domain $D ( T) = Y$ in the following sense. A pair $(
\tilde{y}, \tilde{x}^{\ast})$ is in the graph of $T$ if the condition $\langle x^{\ast} -
\tilde{x}^{\ast}, i ( y - \tilde{y}) \rangle \geq 0$ holds for all $( y, x^{\ast})
\in {\rm graph} ( T)$ where $\langle, \rangle$ denotes the continuous pairing between $X,X^*$. In this article, we propose a topological degree $T$ possessing classical properties of topological degree in certain sense. The construction generalizes the F. Browder's classical degree of maximal monotone maps
\cite{Browder82}. The Browder's degree is constructed as follows.
Assume that $X$ is a reflexive Banach space equipped with a
uniformly convex norm and $T : X \rightarrow 2^{X^{\ast}}$ a maximal monotone
map. The map $T_{\epsilon} = T + \epsilon J$, for $\epsilon > 0$
where $J : X \rightarrow X^{\ast}$ is the duality mapping possesses the following properties:
\begin{enumerate}
  \item $T_{\epsilon}$ is a map satisfying the following condition: for any $x_n
  \rightharpoonup x$ in $X$, if there is $x^{\ast}_n \in T ( x_n)$ such that
  \[ \limsup_{n \rightarrow \infty} \langle x^{\ast}_n + \epsilon J ( x_n),
     x_n - x \rangle \leq 0, \]
  then $x_n \rightarrow x$.
  
  \item The map $T_{\epsilon}$ is onto $X^{\ast}$,
  
  \item if $x_1 \neq x_2$, the sets $T(x_1),T(x_2)$ are disjoint, that is,
  \[ T ( x_1) \cap T ( x_2) = \emptyset . \]
  \item The map $T_{\epsilon}^{- 1} : X^{\ast} \times ( 0, \infty) \rightarrow X$
  is well defined and continuous.
\end{enumerate}
Notice that $J$ is single valued, bijective and bi-continuous if $X$ is uniformly convex. It is shown  that the map $( T_{\epsilon}^{- 1} + \epsilon J^{- 1})^{- 1} : X
\rightarrow X^{\ast}$ is a single valued demi-continuous and $( S)_+$ for which a degree theory has been developed by F. Browder. The
degree of $T$ at $0\in X^*$ in an open bounded set $D \subset X$ is defined by the following relation

\begin{equation}
  \label{BD} \deg ( T, D, 0) = \lim_{\epsilon \rightarrow 0} \deg ( (
  T_{\epsilon}^{- 1} + \epsilon J^{- 1})^{- 1}, D, 0).
\end{equation}
The degree suggested in this article generalizes the Browder's degree in the sense that if $Y = X$, the two degrees are the same. The main advantage of the suggested degree is the direct use of finite rank approximation we employed in our previous work {\cite{Nik-deg1}} for single valued mappings. It is seen that the constructed degree makes calculations much simpler than (\ref{BD}). We take note that that the suggested degree is different from the degree of the map
$i^{\ast} \circ T : Y \rightarrow 2^{Y^{\ast}}$. The difference between two
formulations is discussed in {\cite{Nik-deg1}} for single valued maps.

\begin{definition}
  Assume that $X_1$ and $X_2$ are Banach spaces. A map $A : X_1 \rightarrow
  2^{X_2}$ is called upper semi-continuous at $x \in X_1$ if for every
  neighborhood $V$ of $A ( x)$, there exists an open neighborhood $U$ of $x$
  such that $T ( U) \subset V$.
\end{definition}

We have the following theorem for the upper semi-continuous multi-valued
mappings; see for example \cite{Cellina1969,Lloyd1978}.

\begin{theorem}
  \label{econt}{\dueto{$\epsilon$-continuous subgraph}}Assume that $X_1$
  and $X_2$ are Banach spaces, and the map $A : X_1 \rightarrow 2^{X_2}$ is
  upper semi-continuous. If $A ( x)$ is closed and convex for all $x \in X_1$,
  then for any $\epsilon > 0$, there exists a continuous single valued
  function $A_{\epsilon} : X_1 \rightarrow X_2$ such that for any $x \in
  X_1$, there exists $z_1 \in X_1$ and $\tilde{z}_2 \in A ( z)$ such that $\| x -
  z_1 \| < \epsilon$ and $\| A_{\epsilon} ( x) - \tilde{z}_2 \| <
  \epsilon$.
\end{theorem}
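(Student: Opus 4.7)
The plan is to build $A_\epsilon$ by a partition-of-unity construction, using upper semi-continuity to choose the mesh of the cover small enough, and convexity of the values $A(x)$ to glue the local selections into something close to a single value of $A$. Concretely, for each $x\in X_1$ I would first use upper semi-continuity to pick a radius $\delta(x)\in(0,\epsilon/2)$ such that
\[
A\bigl(B(x,\delta(x))\bigr)\subset A(x)+(\epsilon/2)B_{X_2},
\]
where $B_{X_2}$ is the open unit ball. The family $\{B(x,\delta(x)/2):x\in X_1\}$ covers $X_1$, and since $X_1$ is a metric (hence paracompact) space I can pass to a locally finite open refinement $\{V_\alpha\}_{\alpha\in I}$ together with a subordinate continuous partition of unity $\{\phi_\alpha\}_{\alpha\in I}$. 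For each $\alpha$ I fix a center $x_\alpha$ with $V_\alpha\subset B(x_\alpha,\delta(x_\alpha)/2)$ and, using $A(x_\alpha)\neq\emptyset$, pick an arbitrary $y_\alpha\in A(x_\alpha)$. I then define
\[
A_\epsilon(x)\assign\sum_{\alpha\in I}\phi_\alpha(x)\,y_\alpha,
\]
which is a locally finite sum of continuous functions and hence continuous from $X_1$ to $X_2$.

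To verify the approximation property, fix $x\in X_1$ and let $I(x)=\{\alpha:\phi_\alpha(x)>0\}$, which is finite. Among these indices choose $\alpha^*\in I(x)$ maximizing $\delta(x_\alpha)$. For every $\alpha\in I(x)$ the triangle inequality gives
\[
\|x_\alpha-x_{\alpha^*}\|\leq \|x_\alpha-x\|+\|x-x_{\alpha^*}\|<\tfrac{\delta(x_\alpha)}{2}+\tfrac{\delta(x_{\alpha^*})}{2}\leq\delta(x_{\alpha^*}),
\]
so $x_\alpha\in B(x_{\alpha^*},\delta(x_{\alpha^*}))$, and by the choice of $\delta(x_{\alpha^*})$ there exists $y'_\alpha\in A(x_{\alpha^*})$ with $\|y_\alpha-y'_\alpha\|\leq\epsilon/2$. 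Since $A(x_{\alpha^*})$ is convex and $\sum_{\alpha\in I(x)}\phi_\alpha(x)=1$, the element
\[
\tilde z_2\assign\sum_{\alpha\in I(x)}\phi_\alpha(x)\,y'_\alpha
\]
lies in $A(x_{\alpha^*})$. Setting $z_1\assign x_{\alpha^*}$ one obtains $\|x-z_1\|<\delta(x_{\alpha^*})/2<\epsilon/2<\epsilon$ and
\[
\|A_\epsilon(x)-\tilde z_2\|\leq\sum_{\alpha\in I(x)}\phi_\alpha(x)\|y_\alpha-y'_\alpha\|\leq \epsilon/2<\epsilon,
\]
which is the required conclusion.

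The main obstacle I anticipate is the coupling between the spatial approximation $\|x-z_1\|<\epsilon$ and the range approximation $\|A_\epsilon(x)-\tilde z_2\|<\epsilon$: one cannot simply pick $z_1$ to be the nearest center $x_\alpha$, because $A_\epsilon(x)$ is a convex combination of values $y_\alpha\in A(x_\alpha)$ drawn from \emph{several} different points, and the different $A(x_\alpha)$ need not share any common element. The trick of selecting the index $\alpha^*$ with the \emph{largest} associated radius $\delta(x_\alpha)$ is what forces all the competing centers $x_\alpha$ into one ball on which upper semi-continuity controls the values, and convexity of $A(x_{\alpha^*})$ then absorbs the convex combination. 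Everything else --- continuity of $A_\epsilon$, finiteness of $I(x)$, existence of the cover --- is standard once this selection step is in place.
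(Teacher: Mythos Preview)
Your argument is correct and is exactly the classical Cellina partition-of-unity construction; the paper itself does not prove this theorem but merely cites Cellina~\cite{Cellina1969} and Lloyd~\cite{Lloyd1978}, whose proofs proceed precisely along the lines you outline. The key trick you highlight---selecting the index $\alpha^*$ with the largest $\delta(x_\alpha)$ so that all competing centers fall into a single ball governed by upper semi-continuity, then using convexity of $A(x_{\alpha^*})$ to absorb the convex combination---is the standard device, and your execution is clean.
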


\begin{proposition}
  Let $X, Y$ be Banach spaces, $i : Y \rightarrow X$ a continuous embedding
  and $T : Y \rightarrow 2^{X^{\ast}}$ a maximal monotone map with the
  effective domain $Y$. Then $T ( y)$ is closed and convex for all $y \in
  Y$, and $T$ is norm to weak-star upper semi-continuous in the following
  sense. For arbitrary $y \in Y$, and arbitrary sequence $( y_n)$ converges to
  $y$ in norm, there is a weakly limit point $x^{\ast}$ of $\cup_n T ( y_n)$
  such that $x^{\ast} \in T ( y)$.
\end{proposition}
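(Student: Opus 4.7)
The plan is to derive closedness and convexity of $T(y)$ directly from the maximality characterization, and to establish upper semi-continuity via local boundedness combined with weak compactness and a limit passage in the monotonicity inequality.

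For the closedness and convexity of $T(y)$, the idea is to read the maximality condition as writing $T(y)$ as an intersection of closed half-spaces in $X^{*}$. For each $(y_0, x_0^{*}) \in \mathrm{graph}(T)$, the set
\[
H(y_0, x_0^{*}) = \{\xi^{*} \in X^{*} : \langle x_0^{*} - \xi^{*}, i(y_0 - y)\rangle \geq 0\}
\]
is a sublevel set of the bounded linear functional $\xi^{*} \mapsto \langle \xi^{*}, i(y_0-y)\rangle$, hence a norm-closed (in fact weak-$\ast$-closed) convex half-space. The hypothesis that $T$ is maximal monotone, in the sense stated at the outset of the introduction, says precisely that $T(y) = \bigcap_{(y_0, x_0^{*}) \in \mathrm{graph}(T)} H(y_0, x_0^{*})$, which is therefore closed and convex.

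For the upper semi-continuity, fix $y_n \to y$ in $Y$, so that $i(y_n) \to i(y)$ in $X$, and choose any $x_n^{*} \in T(y_n)$ (possible since $D(T)=Y$). The plan is to extract a bounded subsequence in $X^{*}$, pass to a weak-$\ast$ convergent further subsequence (reflexivity of $X$ from its uniform convexity makes weak and weak-$\ast$ agree on $X^{*}$, and Banach--Alaoglu applies), and identify the limit in $T(y)$ via maximality. To pass to the limit, fix any $(\tilde{y}, \tilde{x}^{*}) \in \mathrm{graph}(T)$ and split $i(\tilde{y} - y_{n_k}) = i(\tilde{y} - y) + i(y - y_{n_k})$ in the monotonicity inequality
\[
\langle \tilde{x}^{*} - x_{n_k}^{*}, i(\tilde{y} - y_{n_k})\rangle \geq 0.
\]
Weak convergence handles the first summand, while the cross term $\langle x_{n_k}^{*}, i(y - y_{n_k})\rangle$ vanishes by boundedness of $\|x_{n_k}^{*}\|_{X^{*}}$ and $\|i(y - y_{n_k})\|_{X} \to 0$. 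The limiting inequality $\langle \tilde{x}^{*} - x^{*}, i(\tilde{y} - y)\rangle \geq 0$, valid for every $(\tilde{y}, \tilde{x}^{*}) \in \mathrm{graph}(T)$, then yields $x^{*} \in T(y)$ by maximality, and $x^{*}$ is the desired weak limit point of $\bigcup_n T(y_n)$.

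The main obstacle is to guarantee a bounded subsequence of $(x_n^{*})$ in $X^{*}$. A natural attempt is to apply Rockafellar's local boundedness theorem to the induced monotone operator $i^{*} \circ T : Y \to 2^{Y^{*}}$, which has effective domain $Y$; this immediately bounds $\|i^{*} x_n^{*}\|_{Y^{*}}$ near $y$. Promoting this to $\|x_n^{*}\|_{X^{*}}$-boundedness is delicate: the normalization $\hat{x}_n^{*} = x_n^{*}/\|x_n^{*}\|_{X^{*}}$ leads, via the rescaled monotonicity, to a weak-$\ast$ cluster point annihilating $i(Y)$, which need not vanish unless $i$ has dense range or one exploits additional structure of the $X^{*}$-form of maximality. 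This is where I would expect the real technical work to lie.
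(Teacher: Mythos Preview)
The paper gives no proof of this proposition at all: it simply asserts that the argument is ``completely similar'' to the classical case $T:X\to 2^{X^*}$ and cites Barbu's book. Your approach---writing $T(y)$ as an intersection of closed half-spaces for closedness and convexity, then invoking local boundedness, weak-$*$ compactness, and a passage to the limit in the monotonicity inequality for the upper semi-continuity---is exactly that classical route, so there is nothing to compare.

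Your hesitation about promoting $Y^*$-boundedness of $i^* x_n^*$ to $X^*$-boundedness of $x_n^*$ is well founded, and the paper does not address it either. In the generality of the proposition as stated (arbitrary Banach spaces, arbitrary continuous embedding), local boundedness of $T$ in the $X^*$-norm can genuinely fail: with $Y=\mathbb{R}$, $X=\mathbb{R}^2$, $i(t)=(t,0)$ and $T(t)=\{t\}\times\mathbb{R}\subset X^*$, one checks that $T$ is maximal monotone in the paper's sense, yet every $T(t)$ is unbounded in $X^*$. So the phrase ``completely similar'' hides a real issue. Note, however, that for the paper's actual applications (e.g.\ the proof of Lemma~1.5, where only the finite-rank projections $\langle x_m^*,i(y^k)\rangle$ need to be controlled) the $Y^*$-boundedness of $i^*\circ T$ that you obtain from Rockafellar already suffices; the stronger $X^*$-statement is not really used downstream.
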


The proof is completely similar to one for the map $T : X \rightarrow
2^{X^{\ast}}$. For a proof of the standard version see for example \cite{Barbu}.

If $Y$ is a separable and reflexive Banach space equipped with a uniformly
convex norm, a theorem by Browder and Ton \cite{Browder1968a} guarantees the
existence of a separable Hilbert space $H$ such that the embedding $j : H
\hookrightarrow Y$ is dense and compact. Choosing an orthogonal basis $\{ h_k
\}_{k = 1}^{\infty}$ for $H$, we obtaine the basis $\mathcal{Y}= \{ y^1,
y^2, \cdots, y^n, \cdots \}$ for $Y$ where \ $y^k = j ( h_k)$, and accordingly, the filtration $\mathbb{Y}= \{ Y_n \}$, where $Y_n = {\rm span}
\{ y^1, \ldots, y^n \}$. The following proposition is simply verified.
\begin{proposition}
For any $y\in Y$, there is a sequence $(y_n), y_n\in Y_n$ such that $y_n\to y$.
\end{proposition}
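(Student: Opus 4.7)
The plan is to reduce the statement to two density facts that are already in place: density of $j(H)$ in $Y$, and density of the span of $\{h_k\}$ in the Hilbert space $H$. The continuous embedding $j$ will transport the second density through to $Y$, and nestedness of the $Y_n$ will let me extract a sequence indexed by $n$ itself.

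First I would fix $y \in Y$ and $\epsilon > 0$, and use the density of $j(H)$ in $Y$ (which is part of the Browder--Ton theorem invoked above) to pick $h \in H$ with $\|j(h) - y\|_Y < \epsilon/2$. Since $\{h_k\}$ is an orthonormal basis of the separable Hilbert space $H$, the partial sums
\[
  s_N \assign \sum_{k=1}^{N} \langle h, h_k\rangle_H\, h_k
\]
converge to $h$ in $H$. By continuity of the embedding $j$, the images $j(s_N) = \sum_{k=1}^{N} \langle h, h_k\rangle_H\, y^k \in Y_N$ converge to $j(h)$ in $Y$, so for $N$ large enough $\|j(s_N) - y\|_Y < \epsilon$. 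This shows that $\bigcup_n Y_n$ is dense in $Y$.

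To produce the desired sequence $(y_n)$ with $y_n \in Y_n$, I would exploit the inclusion $Y_n \subset Y_{n+1}$: setting $d_n \assign \inf_{z \in Y_n}\|z-y\|_Y$, the sequence $(d_n)$ is nonincreasing, and by the density just established it tends to $0$. Pick $y_n \in Y_n$ with $\|y_n - y\|_Y < d_n + 1/n$; then $y_n \to y$ in $Y$.

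There is no real obstacle; the only place where one has to be a little careful is the passage from "the union $\bigcup_n Y_n$ is dense" to "for each index $n$ there is a $y_n \in Y_n$ converging to $y$". This is handled cleanly by the monotonicity $Y_n \subset Y_{n+1}$, so the distances $d_n$ decrease monotonically to zero and one can pick $y_n$ in the prescribed layer at step $n$.
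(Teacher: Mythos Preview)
Your argument is correct: density of $j(H)$ in $Y$ together with convergence of the orthogonal partial sums in $H$ and continuity of $j$ gives density of $\bigcup_n Y_n$, and the nestedness $Y_n\subset Y_{n+1}$ lets you select $y_n\in Y_n$ with $y_n\to y$. The paper itself does not supply a proof, merely stating that the proposition ``is simply verified''; your write-up is exactly the natural verification the authors have in mind.
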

The pairing in $Y_n$ is denoted by $(,)$ and is defined by the relation $(y^i,y^j)=\delta_{i j}$ for all $y^i, y^j\in \mathcal{Y}$. We define the maximal monotone operato $T:Y\to 2^{X^*}$ in the following sense.
\begin{definition}
  Suppose $X$ and $Y$ are separable and reflexive Banach spaces equipped with
  uniformly convex norm, and assume that $T : Y \rightarrow 2^{X^{\ast}}$ is a maximal monotone map. The finite rank approximation of arbitrary $x^{\ast}
  \in T ( y)$ in $Y_n \in \mathbb{Y}$, is defined by $\hat{x}_n = \sum_{k =
  1}^n \langle x^{\ast}, i ( y^k) \rangle y^k$. Accordingly, the finite rank
  map $T_n : Y \rightarrow 2^{Y_n}$ is defined by the relation
  \begin{equation}
    \label{Gen-mmon-ap} T_n ( y) = \bigcup_{x^{\ast} \in T ( y)} \hat{x}_n .
  \end{equation}
\end{definition}
For any $x^*\in X^*	$ and $y \in Y_n$, we have the property
\[ ( \hat{x}_n, y) = \langle x^{\ast}, i ( y) \rangle, \]
where $\langle, \rangle$ is the
pairing between $X^{\ast}, X$. In fact, if $x^{\ast} \in X^*$, then for
$\hat{x}_n = \sum_{k = 1}^n \langle x^{\ast}, y^k \rangle y^k$, we have
\[ \sum_{k = 1}^n \langle x^{\ast}, i ( y^k) \rangle ( y^k, y) = \sum_{k =
   1}^n \langle x^{\ast}, i ( ( y^k, y) y^k) \rangle = \left\langle x^{\ast},
   i \left( \sum_{k = 1}^n ( y^k, y) y^k \right) \right\rangle, \]
and thus $\langle x^{\ast}, i ( y) \rangle = ( \hat{x}_n, y)$.

\begin{lemma}
  \label{Tnlem}The finite rank approximation $T_n$ is upper semi-continuous
  and for every $x \in Y$, the set $T_n (x)$ is closed and convex.
\end{lemma}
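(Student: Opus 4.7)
The plan is to realize $T_n$ as the composition $P_n \circ T$, where $P_n \colon X^{\ast} \to Y_n$ is the bounded linear operator $P_n(x^{\ast}) = \sum_{k=1}^n \langle x^{\ast}, i(y^k)\rangle\, y^k$, and then transfer the known properties of $T$ through $P_n$. A key preliminary observation is that $P_n$ is weak-to-norm continuous: since each coordinate functional $x^{\ast} \mapsto \langle x^{\ast}, i(y^k)\rangle$ is weakly continuous on $X^{\ast}$ and $Y_n$ is finite-dimensional, weak convergence in $X^{\ast}$ forces norm convergence of the images in $Y_n$. Note also that $X$ being uniformly convex is in particular reflexive, so the weak and weak-$\ast$ topologies on $X^{\ast}$ coincide.

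For convexity and closedness of $T_n(y) = P_n(T(y))$, convexity is immediate from linearity of $P_n$ and convexity of $T(y)$ supplied by the preceding proposition. For closedness I would argue that $T(y)$ is weakly compact in $X^{\ast}$, so that the continuous linear image $P_n(T(y))$ is compact, and hence closed, in the finite-dimensional space $Y_n$. Weak compactness of $T(y)$ comes from two ingredients: weak closedness, because $T(y)$ is the intersection over $(\tilde y, \tilde x^{\ast}) \in \mathrm{graph}(T)$ of the weakly closed half-spaces $\{x^{\ast} : \langle x^{\ast} - \tilde x^{\ast}, i(y - \tilde y)\rangle \geq 0\}$; and boundedness, which is the classical local boundedness of maximal monotone maps at interior points of their effective domain, applied here since $D(T) = Y$.

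For upper semi-continuity, since $T_n$ will then have compact values in a finite-dimensional space, it suffices to verify the sequential criterion: whenever $y_m \to y$ in $Y$ and $\hat{x}_n^{(m)} \in T_n(y_m)$, some subsequence of $(\hat{x}_n^{(m)})$ converges to an element of $T_n(y)$. I would pick representatives $x^{\ast,(m)} \in T(y_m)$ with $P_n(x^{\ast,(m)}) = \hat{x}_n^{(m)}$. Local boundedness of $T$ on a neighborhood of $y$ makes $(x^{\ast,(m)})$ bounded in $X^{\ast}$ for large $m$; reflexivity of $X$ then yields a weakly convergent subsequence $x^{\ast,(m_k)} \rightharpoonup x^{\ast}$. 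The norm-to-weak upper semi-continuity of $T$ from the preceding proposition (equivalently, demi-closedness of the graph of a maximal monotone map) forces $x^{\ast} \in T(y)$, and then $P_n(x^{\ast,(m_k)}) \to P_n(x^{\ast}) \in T_n(y)$ by the weak-to-norm continuity of $P_n$.

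The main obstacle is invoking the two non-elementary facts about $T$: local boundedness at interior points of $D(T)$, which supplies the weakly convergent subsequence, and demi-closedness of the graph, which identifies its limit as lying in $T(y)$. Both are classical results about maximal monotone operators, and the latter is essentially the content of the preceding proposition; once these are in hand, the lemma reduces to elementary continuity and convexity transfer through the finite-rank projection $P_n$.
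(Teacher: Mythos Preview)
Your proposal is correct and follows essentially the same approach as the paper: both arguments use local boundedness of $T$ to extract a weakly convergent subsequence in $X^{\ast}$, then invoke the norm-to-weak upper semi-continuity (demi-closedness) of $T$ to identify the limit, and finally pass through the finite-rank map to $Y_n$. Your explicit naming of the linear projection $P_n$ and the observation that it is weak-to-norm continuous streamlines the presentation, and your closedness argument via weak compactness of $T(y)$ is slightly cleaner than the paper's sequential version, but the underlying ideas are identical.
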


\begin{proof}
  Fix $n$ and $\epsilon > 0$. If $T_n$ is not upper semi-continuous at $x
  \in Y$, there is a sequence $( \delta_m), \delta_m \rightarrow 0$ and $x_m
  \in B_{\delta_m} ( x)$ such that for some $\hat{x}_{n, m} \in T_n ( x_m)$,
  we have $\hat{x}_{n, m} \nin V_{\epsilon} ( T_n ( x))$. $T$ is maximal
  monotone, and thus locally bounded. Therefore, there is a subsequence (shown
  for the sake of simplicity again by $\hat{x}_{n, m}$) such that $\hat{x}_{n, m}
  \to \hat{x}$. We show $\hat{x} \in T_n ( x)$. Since $\hat{x}_{n,
  m} \in T_n ( x_m)$, there is $x^{\ast}_m \in T ( x_m)$ such that
  $\hat{x}_{n, m}$ are the finite rank approximation in $Y_n$ of $x^{\ast}_m$,
  that is,
  \[ \hat{x}_{n, m} = \sum_{k = 1}^n \langle x^{\ast}_m, i ( y^k) \rangle y^k
     . \]
  Since $T$ is norm to weak-star upper semi-continuous, and $x_m \rightarrow
  x$, we have $x^{\ast}_m \rightharpoonup x^{\ast}$ for some $x^{\ast} \in T (
  x)$. Thus $\langle x^{\ast}_m, i ( y^k) \rangle \rightarrow \langle
  x^{\ast}, i ( y^k) \rangle$ for all $1 \leq k \leq n$. Therefore
  \[ \hat{x}_{n, m} = \sum_{k = 1}^n \langle x^{\ast}_m, i ( y^k) \rangle y^k
     \rightarrow \sum_{k = 1}^n \langle x^{\ast}, i ( y^k) \rangle y^k \in T_n
     ( x), \]
  and therefore $\hat{x} \in T_n ( x)$.
  Now we show  that $T_n ( x)$ is closed for all $x\in Y$. Consider an arbitrary sequence $\hat{x}_m \in T_n
  (x)$, and $\hat{x}_m \rightarrow \hat{x}$. Let $x^{\ast}_m \in T ( y)$ be
  the sequence such that $\hat{x}_m = \sum_{k = 1}^n \langle x^{\ast}_m, i (
  y^k) \rangle y^k$. Since $T ( y)$ is bounded and convex, the sequence
  $x^{\ast}_m$ converges weakly (in a subsequence) to some $x^{\ast} \in T (
  x)$ and thus
  \[ \hat{x}_m = \sum_{k = 1}^n \langle x^{\ast}_m, i ( y^k) \rangle y^k
     \rightarrow \sum_{k = 1}^n \langle x^{\ast}, i ( y^k) \rangle y^k, \]
  and thus
  \[ \hat{x} = \sum_{k = 1}^n \langle x^{\ast}, i ( y^k) \rangle y^k \in T_n
     ( x) . \]
  That $T_n ( x)$ is convex follows simply from the convexity of $T ( x)$.
\end{proof}

By the Lemma (\ref{Tnlem}) and Theorem (\ref{econt}), the $\epsilon$-continuous selection $T_{n, \epsilon}$ of $T_n$ is well defined. The single valued map $T_{n, \epsilon}$ is continuous and for any
$x \in Y_n$, there is $z \in Y_n$ and $\hat{z} \in T_n ( z)$ such that $\|  z-x \| \in \epsilon$ and $\| \hat{z} - T_{n, \epsilon} (x) \|
< \epsilon$.

\section{Degree definition}

Let $( \epsilon_n)$ be a positive sequence such that $\epsilon_n \rightarrow
0$. Fix $\epsilon > 0$. Consider the function $\tilde{T}_{n, \epsilon_n}
: Y_n \rightarrow Y_n$ defined by the relation
\begin{equation}
  \label{tildeT} \tilde{T}_{n, \epsilon_n} = T_{n, \epsilon_n} +
  \epsilon J_n,
\end{equation}
where $T_{n, \epsilon_n}$ is the $\epsilon_n$-continuous selection of
$T_n$ and $J_n$ is the finite rank approximation of $J \circ i : Y \rightarrow
X^{\ast}$ in $Y_n$ where $J:X\to X^*$ is the bi-continuous duality map.

\begin{lemma}
  Let $D \subset Y$ be an open bounded set and assume that for some
  $\epsilon > 0$, we have $0 \nin {\rm cl}  T^{\epsilon} ( \partial
  D)$, where $T^{\epsilon} = T + \epsilon J \circ i$. Then there is $N > 0$ such that $0
  \nin \tilde{T}_{n, \epsilon_n} ( \partial D_n)$ for all $n \geq N$ where
  $D_n = D \cap Y_n$.
\end{lemma}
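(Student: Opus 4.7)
The plan is to argue by contradiction: if the conclusion fails, one can extract a point $y\in\partial D$ with $0\in T^{\epsilon}(y)$, contradicting $0\notin\mathrm{cl}\,T^{\epsilon}(\partial D)$.

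Suppose that along a subsequence there are $y_n\in\partial D_n$ with $T_{n,\epsilon_n}(y_n)+\epsilon J_n(y_n)=0$. Applying Theorem \ref{econt} to $T_n$ at $y_n$ produces $z_n\in Y_n$ with $\|z_n-y_n\|_Y<\epsilon_n$ and $z_n^{\ast}\in T(z_n)$ whose finite rank approximation $\widehat{z_n^{\ast}}_n$ lies within $\epsilon_n$ of $T_{n,\epsilon_n}(y_n)$ in $Y_n$-norm. Using the bi-continuity of $J$ on bounded sets together with the elementary uniform bound $\|\hat{x}_n\|_{Y_n}\le\|i\circ j\|_{H\to X}\,\|x^{\ast}\|_{X^{\ast}}$ (obtained by identifying $\hat{x}_n$ with the projection of $(i\circ j)^{\ast}x^{\ast}$ onto $\mathrm{span}\{h_1,\dots,h_n\}$), I would deduce that $\widehat{w_n^{\ast}}_n\to 0$ in $Y_n$, where $w_n^{\ast}:=z_n^{\ast}+\epsilon J(i(z_n))\in T^{\epsilon}(z_n)$. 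In particular $\langle w_n^{\ast},i(y^k)\rangle\to 0$ for every fixed $k$.

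Local boundedness of the maximal monotone $T$ on the bounded set $\{z_n\}\subset Y$ and continuity of $J\circ i$ render $\{w_n^{\ast}\}$ bounded in $X^{\ast}$. Reflexivity then yields weak limits $y_n\rightharpoonup y$ in $Y$, $z_n^{\ast}\rightharpoonup z^{\ast}$ and $w_n^{\ast}\rightharpoonup w^{\ast}$ in $X^{\ast}$. The pointwise vanishing $\langle w^{\ast},i(y^k)\rangle=0$, combined with the density of $\mathrm{span}\{y^k\}$ in $Y$ (from density of $j(H)$) and of $i(Y)$ in $X$, forces $w^{\ast}=0$. The heart of the proof is then an $(S)_+$-type upgrade: monotonicity of $T$ at any $(y,u^{\ast})$ gives $\liminf\langle z_n^{\ast},i(z_n-y)\rangle\ge 0$, while the vanishing of $\widehat{w_n^{\ast}}_n$ and the bound $\|z_n-y_n\|_Y<\epsilon_n$ produce $\limsup\langle J(i(z_n)),i(z_n-y)\rangle\le 0$; the uniform convexity of $X$ and the fact that $J$ is its duality map then upgrade $i(z_n)\rightharpoonup i(y)$ to $i(z_n)\to i(y)$ in $X$, so $J(i(z_n))\to J(i(y))$ and $z^{\ast}\in T(y)$ by demiclosedness of the graph of $T$. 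Consequently $0=w^{\ast}=z^{\ast}+\epsilon J(i(y))\in T^{\epsilon}(y)$.

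The main obstacle is to ensure that the limit point $y$ actually lies in $\partial D$, since $\partial D$ is norm-closed in $Y$ but in general not weakly closed, and the weak convergence $y_n\rightharpoonup y$ is not by itself enough. I would bridge this by exploiting the factorization $z_n=j(h_n')$ with $\|h_n'\|_H=\|z_n\|_{Y_n}$ and the compactness of $j:H\hookrightarrow Y$. A refined version of the $(S)_+$ argument above, applied through the representation in $H$, should establish that $\{h_n'\}$ is bounded and that $\|h_n'\|_H\to\|h'\|_H$ where $y=j(h')$, so that compactness of $j$ promotes weak convergence in $H$ to strong convergence in $Y$; then $z_n\to y$ and $y_n\to y$ strongly in $Y$, closedness of $\partial D$ gives $y\in\partial D$, and $0\in T^{\epsilon}(y)$ yields the required contradiction with $0\notin\mathrm{cl}\,T^{\epsilon}(\partial D)$.
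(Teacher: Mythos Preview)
Your overall strategy---contradiction, $\epsilon_n$-approximation, monotonicity of $T$ combined with the $(S)_+$ property of $J$, then identification of the limit as a zero of $T^{\epsilon}$ on the boundary---is exactly the paper's. The paper streamlines the $(S)_+$ step by pairing the identity $\tilde{T}_{n,\epsilon_n}(z_n)=0$ against $z_n-\zeta_n$, where $\zeta_n\in Y_n$ is chosen with $\zeta_n\to z$ in $Y$; because $z_n-\zeta_n\in Y_n$, one can replace finite-rank pairings by $X^\ast$--$X$ pairings directly and arrive at $\limsup\langle J\circ i(z_n),i(z_n-z)\rangle\le 0$ without your detour through weak limits of $w_n^{\ast}$ and density of $i(Y)$ in $X$.

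You are right to flag that the $(S)_+$ property of $J:X\to X^{\ast}$ a priori yields only $i(z_n)\to i(y)$ in $X$, whereas membership in $\partial D$ is a $Y$-norm statement. The paper does not isolate this point: after the $(S)_+$ step it simply records $z_n\to z\in\partial D$. Your proposed remedy via $H$, however, does not work as written. Writing $z_n=j(h_n')$ with $h_n'\in\mathrm{span}\{h_1,\dots,h_n\}$, the identity $\|h_n'\|_H=\|z_n\|_{Y_n}$ you invoke uses the auxiliary inner product $(\,\cdot\,,\cdot\,)$ on $Y_n$, not the Banach norm of $Y$; boundedness of $(z_n)$ in $\|\cdot\|_Y$ (which is all that $D$ bounded gives) does not imply boundedness of $\|z_n\|_{Y_n}=\|h_n'\|_H$. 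Indeed, since $j:H\to Y$ is compact with dense range, $j^{-1}$ is unbounded on $j(H)$, so there is no reason for $\sup_n\|h_n'\|_H<\infty$, and hence no weakly convergent subsequence in $H$ on which to invoke compactness of $j$. The ``refined $(S)_+$ argument through the representation in $H$'' does not supply the missing bound, because the $(S)_+$ estimate lives in $X$ and carries no information about $H$-norms.
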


\begin{proof}
  Otherwise, there is a sequence $z_n \in \partial D_n$ such that
  $\tilde{T}_{n, \epsilon_n} ( z_n) = 0$ for all $n \geq 1$. Since
  $\partial D$ is bounded, there is a subsequence (we show again by $z_n$)
  that weakly converges to $z$. We first show that $z_n$ converges strongly to
  $z$. Choose a sequence $\zeta_n \in Y_n$ that converges to $z$ in norm.
  Since $\tilde{T}_{n, \epsilon_n} ( z_n) = 0$ on $Y_n$, we have
  \[ ( T_{n, \epsilon_n} ( z_n), z_n - \zeta_n) + \epsilon ( J_n ( z_n),
     z_n - \zeta_n) = 0, \]
  because $z_n - \zeta_n \in Y_n$. By the relation
  \[ ( J_n ( z_n), z_n - \zeta_n) = \langle J \circ i ( z_n), i ( z_n -
     \zeta_n) \rangle, \]
  we can write
  \begin{equation}
    \label{Tne} ( T_{n, \epsilon_n} ( z_n), z_n - \zeta_n) + \epsilon
    \langle J \circ i ( z_n), i ( z_n - \zeta_n) \rangle = 0.
  \end{equation}
  On the other hand, for each $z_n$, there is $x_n \in Y_n$ and $\hat{x}_n \in
  T_n ( x_n)$ such that
  \[ \| x_n-z_n \| < \epsilon_n, \| \hat{x}_n - T_{n, \epsilon_n} (
     z_n) \| < \epsilon_n . \]
  Therefore, we have
  \[ ( T_{n, \epsilon_n} ( z_n), z_n - \zeta_n) = ( T_{n, \epsilon_n} (
     z_n) - \hat{x}_n, z_n - \zeta_n) + ( \hat{x}_n, z_n - \zeta_n),
      \]
      and by the relation $\|T_{n, \epsilon_n} (
     z_n) - \hat{x}_n\| < \epsilon_n$, we obtain
      \[( T_{n, \epsilon_n} ( z_n), z_n - \zeta_n)
       \geq -
     \epsilon_n \| z_n - \zeta_n \| + ( \hat{x}_n, z_n - \zeta_n)
      \]
  By the relation $\| x_n - z_n \| < \epsilon_n$, we have
  \[ ( \hat{x}_n, z_n - \zeta_n) \geq - \epsilon_n \| \hat{x}_n \| + (
     \hat{x}_n, x_n - \zeta_n) . \]
  Since $\hat{x}_n \in T_n ( x)$, there are $x^{\ast}_n \in T ( x_n)$ such
  that $\hat{x}_n$ are the finite rank approximations of $x^{\ast}_n$ in
  $Y_n$. Thus, we can write
  \[ ( \hat{x}_n, x_n - \zeta_n) = \langle x^{\ast}_n, i ( x_n - \zeta_n)
     \rangle . \]
  Also, for some $C > 0$, we can write
  \[ \langle x^{\ast}_n, i ( x_n - \zeta_n) \rangle \geq - C \| x^{\ast}_n \| 
     \| z - \zeta_n \| + \langle x^{\ast}_n, i ( x_n - z) \rangle . \]
  Choose an arbitrary $z^{\ast} \in T ( z)$. We have
  \[ \langle x^{\ast}_n, i ( x_n - z) \rangle \geq \langle x^{\ast}_n -
     z^{\ast}, i ( x_n - z) \rangle + \langle z^{\ast}, i ( x_n - z) \rangle
     \geq \langle z^{\ast}, i ( x_n - z) \rangle . \]
  Since $z_n \rightharpoonup z$ and $\| x_n - z_n \| \to 0$, we conclude
  \[ \lim_{n \rightarrow \infty} ( T_{n, \epsilon_n} ( z_n), z_n - \zeta_n)
     \geq 0. \]
  Thus, the relation (\ref{Tne}) implies
  \[ \limsup_{n \rightarrow \infty}  \langle J \circ i ( z_n), i ( z_n -
     \zeta_n) \rangle \leq 0. \]
  By the relation $\zeta_n \rightarrow z$, we conclude
  \[ \limsup_{n \rightarrow \infty}  \langle J \circ i ( z_n), i ( z_n -
     \zeta_n) \rangle \leq 0, \]
  and since $J$ is a map of class $( S)_+$, we obtain $z_n \rightarrow z \in
  \partial D$. 
  Now we show $0 \in {\rm cl}  T^{\epsilon} ( z)$. Choose
  arbitrary $y \in Y$ and sequence $y_n \in Y_n$, $y_n \rightarrow y$. We have
  \[ ( T_{n, \epsilon_n} ( z_n), y_n) + \epsilon \langle J \circ i (
     z_n), i ( y_n) \rangle = 0. \]
  Since $J$ is continuous, we have $\langle J \circ i ( z_n), i ( y_n) \rangle
  \rightarrow \langle J \circ i ( z), y \rangle$. Choose $x_n \in Y_n$ and
  $\hat{x}_n \in T_n ( x_n)$ such that
  \[ \| x_n-z_n \| < \epsilon_n, \| \hat{x}_n - T_{n, \epsilon_n} (
     z_n) \| < \epsilon_n . \]
  We have
  \[ \lim | ( T_{n, \epsilon_n} ( z_n), y_n) - ( \hat{x}_n, y_n) | = 0. \]
  For $x^{\ast}_n \in T ( x_n)$, and by the relation $y_n \rightarrow y$, we
  obtain
  \[ \lim | ( T_{n, \epsilon_n} ( z_n), y_n) - \langle x^{\ast}_n, i ( y)
     \rangle | = 0. \]
  Since $T$ is norm to weak-star upper semi-continuous, we conclude $\langle
  x^{\ast}_n, i ( y) \rangle \rightarrow \langle x^{\ast}, y \rangle$ for some
  $x^{\ast} \in T ( z)$. This implies that $x^{\ast} + \epsilon J \circ i (
  z)$=0 and thus $0 \in {\rm cl}  T^{\epsilon} ( z)$ that contradicts the
  condition $0 \nin {\rm cl}  T^{\epsilon} ( \partial D)$.
\end{proof}

\begin{proposition}
  Assume that $0 \nin {\rm cl}  T ( \partial D)$. Then there is
  $\epsilon > 0$ such that $0 \nin {\rm cl}  T^{\epsilon} ( \partial
  D)$.
\end{proposition}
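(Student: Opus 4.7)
The natural approach is proof by contradiction. Suppose on the contrary that for every $\epsilon>0$ we have $0\in \mathrm{cl}\, T^{\epsilon}(\partial D)$. Applying this with $\epsilon=1/n$, the definition of closure provides, for each $n$, a point $z_n\in\partial D$ and an element $x_n^{\ast}\in T(z_n)$ such that
\[
\bigl\| x_n^{\ast} + \tfrac{1}{n}\, J\circ i(z_n)\bigr\|_{X^{\ast}} < \tfrac{1}{n}.
\]

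The remaining task is to show that this forces $x_n^{\ast}\to 0$ in $X^{\ast}$, which will contradict the hypothesis $0\nin \mathrm{cl}\, T(\partial D)$ since each $x_n^{\ast}\in T(z_n)\subset T(\partial D)$. First I would observe that $\partial D$ is bounded in $Y$, so by continuity of the embedding $i:Y\to X$ the sequence $\{i(z_n)\}$ is bounded in $X$. Since $J$ is the duality map, $\|J\circ i(z_n)\|_{X^{\ast}}=\|i(z_n)\|_X$, so $\{J\circ i(z_n)\}$ is norm-bounded in $X^{\ast}$. Consequently $\tfrac{1}{n} J\circ i(z_n)\to 0$ in $X^{\ast}$, and the displayed inequality yields $x_n^{\ast}\to 0$ in $X^{\ast}$.

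Since $x_n^{\ast}\in T(\partial D)$ and $x_n^{\ast}\to 0$, we get $0\in \mathrm{cl}\, T(\partial D)$, contradicting the assumption. Hence some $\epsilon>0$ satisfies $0\nin \mathrm{cl}\, T^{\epsilon}(\partial D)$.

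I do not anticipate any serious obstacle: the argument uses only boundedness of $\partial D$, continuity of $i$, and the norm-preservation property of the duality map, together with the elementary extraction of a diagonal sequence from the closure condition. The only subtle point to verify is the convention that $\mathrm{cl}$ refers to norm closure in $X^{\ast}$ (which is consistent with how the preceding lemma used $\mathrm{cl}\, T^{\epsilon}(\partial D)$), so that norm convergence $x_n^{\ast}\to 0$ is exactly what places $0$ in $\mathrm{cl}\, T(\partial D)$.
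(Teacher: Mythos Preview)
Your proof is correct and rests on the same ingredient as the paper --- boundedness of $\partial D$ makes $\epsilon\, J\circ i(z)$ uniformly small, so it cannot move $T(\partial D)$ onto $0$ --- only packaged as a contradiction with a diagonal sequence rather than a direct triangle-inequality estimate. The paper's direct argument has the minor practical advantage of producing the explicit threshold $\epsilon < \mathrm{dist}\bigl(0,\mathrm{cl}\,T(\partial D)\bigr)\big/\sup_{z\in\partial D}\|z\|$, which it later cites when defining the degree.
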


\begin{proof}
  By the assumption, there is $r > 0$ such that ${\rm dist} ( 0 \nocomma,
  {\rm cl} T ( \partial D)) = r$. Let $z \in \partial D$ is arbitrary. Take
  arbitrary $z^{\ast} \in T ( z)$. We have
  \[ \| z^{\ast} + \epsilon J ( z) \| \geq \| z^{\ast} \| - \epsilon \|
     z \| \geq r - \epsilon \| z \| . \]
  Therefore $0 \nin {\rm cl}  T^{\epsilon} ( \partial D)$ if
  \begin{equation}
    \label{epsilon} 0 < \epsilon < \frac{r}{_{} \max_{z \in \partial D}  \|
    z \|} .
  \end{equation}
  The boundedness of $\partial D$ guarantees the existence of $\epsilon > 0$.
\end{proof}

\begin{definition}
  Assume that $X$ and $Y$ are separable and reflexive Banach spaces equipped
  with uniformly convex norms, $D \subset Y$ is an open bounded set and $T :
  Y \to 2^{X^{\ast}}$ is a maximal monotone map such that $0 \nin
  {\rm cl} T ( \partial D)$. Choose $\epsilon > 0$ satisfying $\left(
  \ref{epsilon} \right)$ and consider the map $\tilde{T}_{n, \epsilon_n}$
  defined in $\left( \ref{tildeT} \right)$. The degree of $T$ in $D$ with
  respect to $0$ is defined by the following formula
  \begin{equation}
    \label{mmdeg} \deg ( T, D, 0) = \lim_{n \rightarrow \infty} \deg_B (
    \tilde{T}_{n, \epsilon_n}, D_n, 0),
  \end{equation}
  where $\deg_B$ is the usual Brouwer's degree of the map $\tilde{T}_{n,
  \epsilon_n}$.
\end{definition}

\begin{proposition}
  The degree defined in the relation $\left( \ref{mmdeg} \right)$ is
  stable with respect to $n$.
\end{proposition}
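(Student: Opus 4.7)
The plan is to prove that $\deg_B(\tilde T_{n,\epsilon_n}, D_n, 0) = \deg_B(\tilde T_{m,\epsilon_m}, D_m, 0)$ for all $N \leq n < m$ with $N$ sufficiently large, by combining the reduction property of the Brouwer degree with a homotopy argument. The previous lemma already ensures these Brouwer degrees are well defined for $n \geq N$; the task is to compare Galerkin approximations at different levels.

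First, I would sharpen the previous lemma to a uniform bound: there exist $N \in \mathbb{N}$ and $\rho > 0$ such that $\|\tilde T_{n,\epsilon_n}(z)\| \geq \rho$ for every $n \geq N$ and $z \in \partial D_n$. The proof follows the same blueprint as the previous lemma: supposing otherwise, extract a weakly convergent subsequence $z_n \rightharpoonup z \in \partial D$; promote to strong convergence via the $(S)_+$ property of $J$; and use norm-to-weak-star upper semi-continuity of $T$ to produce $0 \in {\rm cl}\, T^{\epsilon}(z)$, contradicting (\ref{epsilon}). This uniform estimate is what lets us absorb small perturbations along the forthcoming homotopy.

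For $N \leq n < m$, decompose $Y_m = Y_n \oplus W_{n,m}$ with $W_{n,m} = {\rm span}\{y^{n+1}, \ldots, y^m\}$, and denote by $P_n : Y_m \to Y_n$ the associated linear projection. Introduce the auxiliary map
\[
\Phi_{n,m}(y) := \tilde T_{n,\epsilon_n}(P_n y) + \epsilon (y - P_n y), \qquad y \in Y_m,
\]
which has the Leray--Schauder structure $\Phi_{n,m}(y) - \epsilon y \in Y_n$. The reduction property of the Brouwer degree therefore yields $\deg_B(\Phi_{n,m}, D_m, 0) = \deg_B(\tilde T_{n,\epsilon_n}, D_n, 0)$. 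It then remains to connect $\tilde T_{m,\epsilon_m}$ to $\Phi_{n,m}$ by the linear homotopy $H_s := (1-s)\tilde T_{m,\epsilon_m} + s\Phi_{n,m}$ on $\overline{D_m}$ and to verify $0 \nin H_s(\partial D_m)$ for all $s \in [0,1]$, once $n$ and $m$ are large enough.

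The main obstacle is this last verification, which proceeds by contradiction in the style of the previous lemma but now with two interacting Galerkin levels. Supposing $m_k > n_k \to \infty$, $s_k \in [0,1]$, and $z_k \in \partial D_{m_k}$ satisfy $H_{s_k}(z_k) = 0$, one extracts $z_k \rightharpoonup z \in \partial D$ and tests the identity against $z_k - \zeta_k$ for a sequence $\zeta_k \in Y_{m_k}$ with $\zeta_k \to z$; the $\tilde T_{m_k,\epsilon_{m_k}}$-term is controlled by the monotonicity of $T$, while the $\tilde T_{n_k,\epsilon_{n_k}}(P_{n_k} z_k)$-term is controlled by the uniform estimate above; the $(S)_+$ property of $J$ upgrades to strong convergence $z_k \to z$, and norm-to-weak-star upper semi-continuity of $T$ then forces $0 \in {\rm cl}\, T^{\epsilon}(z)$, contradicting (\ref{epsilon}). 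The delicate point is the simultaneous bookkeeping of two $\epsilon$-continuous selections and two finite-rank truncations, but this is well within reach of the machinery already developed in the preceding lemma.
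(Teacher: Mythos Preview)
Your overall plan --- introduce an auxiliary map that agrees with a lower-level Galerkin map on $Y_n$ and is the identity in the complementary directions, then invoke the reduction property of the Brouwer degree and close the loop by a linear homotopy --- is exactly the skeleton of the paper's proof. The paper, however, executes this one step at a time: it compares level $n$ with level $n-1$ by keeping the $Y_{n-1}$--part of $\tilde T_{n,\epsilon_n}$ intact and replacing only the single $y^n$--component by $(x,y^n)y^n$; the homotopy verification then boils down to the elementary observation that $(\zeta_n,y^n)\to 0$ for any strongly convergent $\zeta_n$. It also needs a separate ``selection independence'' step to identify the $Y_{n-1}$--restriction of $T^1_{n,\epsilon_n}$ with $\tilde T_{n-1,\epsilon_{n-1}}$. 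Your direct $n\to m$ jump with $\Phi_{n,m}=\tilde T_{n,\epsilon_n}\circ P_n+\epsilon(I-P_n)$ is structurally cleaner (it bypasses the selection-independence step), and the reduction identity $\deg_B(\Phi_{n,m},D_m,0)=\deg_B(\tilde T_{n,\epsilon_n},D_n,0)$ is correct once one scales by $\epsilon^{-1}$.

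The gap is in your homotopy verification. You say the $\Phi$--term ``is controlled by the uniform estimate above,'' but that estimate is a \emph{lower} bound on $\|\tilde T_{n,\epsilon_n}\|$ at \emph{boundary} points of $D_n$; it says nothing about the pairing $\bigl(\tilde T_{n_k,\epsilon_{n_k}}(P_{n_k}z_k),\,z_k-\zeta_k\bigr)$, and $P_{n_k}z_k$ need not lie on $\partial D_{n_k}$. What is actually required is a monotonicity argument at the shifted point $P_{n_k}z_k$: choose $\zeta_k\in Y_{n_k}$ with $\zeta_k\to z$, use $\bigl(\tilde T_{n_k,\epsilon_{n_k}}(P_{n_k}z_k),z_k-\zeta_k\bigr)=\bigl(\tilde T_{n_k,\epsilon_{n_k}}(P_{n_k}z_k),P_{n_k}z_k-\zeta_k\bigr)$, and then bound this from below via $T$'s monotonicity. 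But that step needs $P_{n_k}z_k\rightharpoonup z$ in $Y$ (or at least boundedness of $P_{n_k}z_k$ in $Y$), and the $(,)$--orthogonal projections $P_n$ are not a priori uniformly bounded as operators on $Y$. The paper sidesteps precisely this difficulty by perturbing only one coordinate at a time, so that no control of $P_n$ on the full $Y$--scale is ever needed. If you want to keep your direct-jump architecture, you must either supply the missing control on $P_{n_k}z_k$ or redesign $\Phi_{n,m}$ so that the homotopy only disturbs a piece on which you already have uniform bounds.
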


\begin{proof}
  Consider the sequence of mappings $( \tilde{T}_{k, \epsilon_k})$ such that for
  sufficiently large $n$ the condition $0
  \nin {\rm cl} \tilde{T}_{k, \epsilon_k} ( \partial D_k)$ is satisfied for $k \geq n - 1$ .
  First note that \ there is $\epsilon_0 > 0$ such that for $0 <
  \epsilon_1, \epsilon_2 < \epsilon_0$, the following relation holds
  \begin{equation}
    \label{e1e2deg} \deg_B ( \tilde{T}_{n, \epsilon_1}, D_n, 0) = \deg_B (
    \tilde{T}_{n, \epsilon_2}, D_n, 0) \nocomma,
  \end{equation}
  In fact, for any $x \in Y_n$, there is $z_1, z_2 \in Y_n$ and
  $\hat{z}_1 \in T_n ( z_1)$,  $\hat{z}_2 \in T_n ( z_2)$ such that
  \[ \| z_1 - x \| < \epsilon_1, \| \hat{z}_1 - T_{n, \epsilon_1} ( x)
     \| < \epsilon_1, \| z_2 - x \| < \epsilon_2, \| \hat{z}_2 - T_{n,
     \epsilon_2} ( x) \| < \epsilon_2 . \]
  The continuity of $T_{n, \epsilon_1}$, $T_{n, \epsilon_2}$ implies
  that $\| \tilde{T}_{n, \epsilon_1} - \tilde{T}_{n, \epsilon_2} \|$
  can be controlled and thus (\ref{e1e2deg}) holds. Let us write
  $\tilde{T}_{n, \epsilon_n}$ as
  \[ \tilde{T}_{n, \epsilon_n} = \tilde{T}_{n, \epsilon_n}^1 +
     \tilde{T}_{n, \epsilon_n}^2, \]
  where $\tilde{T}_{n, \epsilon_n}^1$ is the projection of $\tilde{T}_{n,
  \epsilon_n}$ into $Y_{n - 1}$ and $\tilde{T}_{n, \epsilon_n}^2$ is the
  projection into $\{ y^n \}$. Define the map $S_{n, \epsilon_n} : Y_n
  \rightarrow 2^{Y_n}$ as
  \[ S_{n, \epsilon_n} ( x) = \tilde{T}_{n, \epsilon_n}^1 ( x) + ( x,
     y^n) y^n \]
  Obviously, we have
  \begin{equation}
    \label{S1} \deg_B ( S_{n, \epsilon_n}, D_n, 0) = \deg_B ( \tilde{T}_{n,
    \epsilon_n}^1, D_{n - 1}, 0) .
  \end{equation}
  First we show
  \begin{equation}
    \label{3eq} \deg_B ( \tilde{T}_{n, \epsilon_n}^1, D_{n - 1}, 0) =
    \deg_B ( \tilde{T}_{n - 1, \epsilon_n}, D_{n - 1}, 0) = \deg_B (
    \tilde{T}_{n - 1, \epsilon_{n - 1}}, D_{n - 1}, 0) .
  \end{equation}
  The last equality follows from (\ref{e1e2deg}). In order to prove the first equality, we note
  that if $T_{n, \epsilon_n}$ is an $\epsilon_n$-continuous selection of
  $T_n$, then $T_{n, \epsilon_1}^1$ is also an $\epsilon_n$-continuous
  selection of $T_{{n - 1}, \epsilon_n}$. In fact, let $x \in Y_{n - 1}$ be arbitrary, then
  there is $z \in Y_n$ and $\hat{z} \in T_n ( y)$ such that
  \[ \| \hat{z}^1 - T^1_{n, \epsilon_n} ( x) + \hat{z}^2 - T^2 n,
     \epsilon_n \|^2 < \epsilon_n^2, \]
  where $\hat{z}^1 \in Y_{n - 1}$ and $\hat{z}^2 \in \{ y^n \}$. This implies
  \[ \| \hat{z}^1 - T^1_{n, \epsilon_n} ( x) \| < \epsilon_n, \| z^1 -
     y \| < \epsilon_n . \]
  Again it follows that $\| T_{n - 1, \epsilon_n} - T^1_{n, \epsilon_n} \|$ can
  be controlled and thus the first equality in (\ref{3eq}) is proved. Now,
  we show
  \begin{equation}
    \label{S2} \deg_B ( \tilde{T}_{n, \epsilon_n}, D_n, 0) = \deg_B ( S_{n,
    \epsilon_n}, D_n, 0) .
  \end{equation}
  Consider the convex homotopy
  \[ h_n ( t) = ( 1 - t)  \tilde{T}_{n, \epsilon_n} + t S_{n,
     \epsilon_n} . \]
  It is enough to show $0 \nin h_n ( t) ( \partial D_n)$ for $t \in [ 0, 1]$.
  Clearly, $0 \nin h_n ( t) ( \partial D_n)$ for $t = 0, 1$. For $t \in ( 0,
  1)$ assume that there exists a sequence $t_n \in ( 0, 1)$ and $( z_n)$, $z_n
  \in \partial D_n$ such that $h_n ( t_n) ( z_n) = 0$. According to the
  construction of $h_n ( t)$ we have
  \[ 0 = h_n ( t_n) ( z_n) = \tilde{T}^1_{n, \epsilon_n} ( z_n) + ( 1 -
     t_n)  \tilde{T}^2_{n, \epsilon_n} ( z_n) + t_n  ( z_n, y^n) y^n . \]
  The above relation implies $\tilde{T}_{n, \epsilon_n}^1 ( z_n) = 0$ and
  \[ \tilde{T}^2_{n, \epsilon_n} ( z_n) = - \frac{t_n}{1 - t_n}  ( z_n,
     y^n) y^n . \]
  Since $\partial D$ is bounded then $z_n \rightharpoonup z$ in a
  subsequence. Choose the sequence $( \zeta_n), \zeta_n \in Y_n$ and $\zeta_n
  \rightarrow z$ and obtain
  \[ ( \tilde{T}_{n, \epsilon_n} ( z_n), z_n - \zeta_n) = - \frac{t_n}{1 -
     t_n}  | ( z_n, y^n) |^2 + \frac{t_n}{1 - t_n}  ( z_n, y^n) ( \zeta_n,
     y^n) . \]
  On the other hand since $\zeta_n \rightarrow z$, we have $( \zeta_n, y^n)
  \rightarrow 0$. Since there exists sequence $\hat{z}_n \in T_n ( z_n)$ such
  that $\| \hat{z}_n - T_{n, \epsilon_n} ( z_n) \| < \epsilon_n$ we can write for some $z^{\ast}_n \in T ( z_n)$
  \[ \limsup_{n \rightarrow \infty}  \langle z^{\ast}_n + \epsilon J \circ
     i ( z_n), i ( z_n - z) \rangle = \limsup_{n \rightarrow \infty}  (
     \tilde{T}_{n, \epsilon_n} ( z_n), z_n - \zeta_n) . \]
  Therefore we obtain
  \[ \limsup_{n \rightarrow \infty}  \langle z^{\ast}_n + \epsilon J \circ
     i ( z_n), i ( z_n - z) \rangle \leq 0. \]
  Since
  \[ \lim_{n \rightarrow \infty} \langle z^{\ast}_n, i ( z_n - z) \rangle \geq
     0, \]
  we obtain
  \[ \limsup_{n \rightarrow \infty}  \langle J \circ i ( z_n), i ( z_n - z)
     \rangle \leq 0, \]
  and thus $z_n \rightarrow z$. This is impossible because $0 \nin {\rm cl} 
  T^{\epsilon} ( \partial D)$.
\end{proof}

Now, we show that the definition (\ref{mmdeg}) satisfies the classical
properties of a topological degree including the solvability and the homotopy invariance.

\begin{theorem}
  Let $D \subset Y$ be an open bounded set and assume that $T : Y \rightarrow
  2^{X^{\ast}}$ is maximal monotone and $0 \nin {\rm cl} T ( \partial D)$. If
  \[ \deg ( T, D, 0) \neq 0, \]
  then there is $y \in D$ such that $0 \in T ( y)$.
\end{theorem}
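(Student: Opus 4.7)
The plan is to run the definition of the degree in reverse: extract finite-rank solutions from the nonvanishing Brouwer degree, pass to the limit in $n$ to obtain a solution of the regularized inclusion $0 \in T^{\epsilon}$, and finally let $\epsilon \to 0$ using the maximal monotonicity of $T$. Specifically, fix $\epsilon > 0$ satisfying (\ref{epsilon}). Since $\deg(T,D,0) = \lim_{n\to\infty} \deg_B(\tilde T_{n,\epsilon_n}, D_n, 0) \neq 0$, the Brouwer degree $\deg_B(\tilde T_{n,\epsilon_n}, D_n, 0)$ is nonzero for every sufficiently large $n$, and the classical solvability of Brouwer's degree furnishes $z_n \in D_n$ with $\tilde T_{n,\epsilon_n}(z_n) = 0$. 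I would then follow, essentially verbatim, the argument of the preceding lemma: extract a weakly convergent subsequence $z_n \rightharpoonup z_{\epsilon}$, choose $\zeta_n \in Y_n$ with $\zeta_n \to z_{\epsilon}$, test the identity $\tilde T_{n,\epsilon_n}(z_n)=0$ against $z_n-\zeta_n$, and use the $\epsilon_n$-selection bounds together with the monotonicity of $T$ to arrive at $\limsup_n \langle J\circ i(z_n), i(z_n - z_{\epsilon})\rangle \le 0$. The $(S)_+$ property of $J$ then upgrades the convergence to $z_n \to z_{\epsilon}$, and the norm-to-weak-$\ast$ upper semicontinuity of $T$ produces $0 \in T^{\epsilon}(z_{\epsilon})$; the preceding proposition gives $0 \notin \mathrm{cl}\, T^{\epsilon}(\partial D)$, so in fact $z_{\epsilon} \in D$.

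This produces, for each sufficiently small admissible $\epsilon$, a point $z_{\epsilon} \in D$ with $x_{\epsilon}^{\ast} := -\epsilon\, J\circ i(z_{\epsilon}) \in T(z_{\epsilon})$. I would then let $\epsilon \to 0$ along a sequence. Because $D$ is bounded and $Y$ reflexive, a subsequence satisfies $z_{\epsilon_k} \rightharpoonup y$ in $Y$, and since $J\circ i$ is bounded on bounded sets, $x_{\epsilon_k}^{\ast} \to 0$ strongly in $X^{\ast}$. Fixing any $(u,u^{\ast}) \in \mathrm{graph}(T)$, monotonicity yields $\langle u^{\ast} - x_{\epsilon_k}^{\ast}, i(u - z_{\epsilon_k})\rangle \ge 0$. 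The strong convergence $u^{\ast} - x_{\epsilon_k}^{\ast} \to u^{\ast}$ in $X^{\ast}$ paired with the weak convergence $i(z_{\epsilon_k}) \rightharpoonup i(y)$ in $X$ allows passage to the limit, giving $\langle u^{\ast}, i(u - y)\rangle \ge 0$ for every $(u,u^{\ast}) \in \mathrm{graph}(T)$. The maximal monotonicity axiom from the introduction then forces $(y,0) \in \mathrm{graph}(T)$, i.e.\ $0 \in T(y)$.

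The principal obstacle is not producing $y$ with $0 \in T(y)$ but verifying that $y \in D$. Once $0 \in T(y)$ is known, the hypothesis $0 \notin \mathrm{cl}\, T(\partial D)$ excludes $y \in \partial D$, so it suffices to place $y \in \overline D$. A further application of monotonicity to the pairs $(z_{\epsilon_k}, x_{\epsilon_k}^{\ast})$ and $(y,0)$ gives $\langle J\circ i(z_{\epsilon_k}), i(z_{\epsilon_k} - y)\rangle \le 0$, and the $(S)_+$ property of $J$ then promotes the weak convergence $z_{\epsilon_k} \rightharpoonup y$ to strong convergence, placing $y \in \overline D$. This final closure step, which recycles the same $(S)_+$ mechanism that powered the passage to the limit in $n$, is in my estimation the most delicate part of the argument.
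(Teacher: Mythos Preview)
Your argument is correct and, in its first half, tracks the paper's proof: extract $z_n\in D_n$ from the nonzero Brouwer degree, test against $z_n-\zeta_n$, use monotonicity together with the $(S)_+$ property of $J$ to upgrade weak to strong convergence, and invoke norm-to-weak$^\ast$ upper semicontinuity of $T$ to identify a limit point. The genuine difference is in how the regularization parameter $\epsilon$ is disposed of. The paper performs a single limit in $n$ and asserts directly that the weak limit $z^\ast$ of the $z^\ast_n\in T(z_n)$ equals $0$; read literally with $\epsilon$ fixed, that computation actually yields only $z^\ast+\epsilon\,J\circ i(z)=0$, i.e.\ $0\in T^\epsilon(z)$. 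Your two-stage scheme---first $n\to\infty$ to obtain $0\in T^\epsilon(z_\epsilon)$ with $z_\epsilon\in D$, then $\epsilon\to 0$ using the maximal monotonicity inequality and a second application of $(S)_+$ to place $y\in\overline D$---is therefore the more careful route and in effect supplies the step the paper elides. The price is an extra compactness/$(S)_+$ pass (and an implicit appeal to the $\epsilon$-independence of the degree so that each small $\epsilon$ produces its own $z_\epsilon$); the gain is that every limit is justified.
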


\begin{proof}
  Assume $\deg ( T, D, 0) \neq 0$, then there exists a sequence $z_n \in D$ such that
  $\tilde{T}_{n, \epsilon_n} ( z_n) = 0$ for sufficiently small $\epsilon_n>0$.
  This implies that there is the sequence $\hat{z}_n \in T_n ( z_n)$ such that $\|
  \hat{z}_n + \epsilon J_n ( z_n) \| < \epsilon_n$. Since $D$ is bounded
  then $z_n$ converges weakly (in a subsequence) to some $z$. Since $T$ is
  monotone, we conclude
  \[ \limsup_{n \rightarrow \infty} \langle J \circ i ( z_n), i ( z_n - z)
     \rangle \leq 0, \]
  and thus $z_n$ converges strongly to $z \in {\rm cl} (D)$. Let
  $\hat{z}_n$ be the $n$-approximation of $z^{\ast}_n \in T ( z_n)$. Since
  $T$ is norm to weak-star upper semi-continuous, the sequence $z^*_n$ converges
  weakly in a subsequence to some $z^{\ast} \in T ( z)$. Let $v \in Y$ be
  arbitrary. Consider the sequence $( v_n), v_n \in Y_n$ and $v_n \rightarrow
  y$. Then we have
  \[ \langle z^{\ast}_n, i ( v_n) \rangle = - \epsilon \langle J \circ i (
     z_n), i ( v_n) \rangle - \epsilon ( T_{n, \epsilon_n} ( z_n) -
     \hat{z}_n, v_n \rangle \rightarrow 0, \]
  On the other hand, $\langle z^{\ast}_n, i ( v_n) \rangle \rightarrow
  \langle z^{\ast}, i ( y) \rangle$, and thus $z^{\ast} = 0$ or equivalently
  $0 \in T ( z)$. Since $0 \nin {\rm cl} T ( \partial D)$, we conclude $z \in
  D$.
\end{proof}

\begin{definition}
  Let $D \subset X$ be an open bounded set, and assume $h : [ 0, 1] \times Y
  \rightarrow 2^{X^{\ast}}$ be a continuous homotopy with respect to $t$ such
  that for any $t \in [ 0, 1]$, the map $h ( t) : Y \rightarrow 2^{X^{\ast}}$
  is maximal monotone. Furthermore assume that $0 \nin {\rm cl} h ( [ 0, 1]\times
  \partial D)$. The map $h$ is called an admissible homotopy for maximal
  monotone maps.
\end{definition}

\begin{proposition}
  The degree defined in $\left( \ref{mmdeg} \right)$ is stable under the
  admissible homotopy of maximal monotone maps.
\end{proposition}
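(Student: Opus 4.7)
The plan is to prove homotopy invariance by combining Brouwer's homotopy invariance on each finite-dimensional slice $D_n$ with a limiting argument, mirroring the structure of the preceding proposition on stability in $n$. For each $t \in [0,1]$ I consider the finite rank approximation $\tilde{h}_{n,\epsilon_n}(t,\cdot) = (h(t))_{n,\epsilon_n} + \epsilon J_n$, which is a continuous map $D_n \to Y_n$ depending continuously on $t$.

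First I would secure a uniform choice of $\epsilon$: the function $t \mapsto {\rm dist}(0, {\rm cl}\, h(t)(\partial D))$ is positive on $[0,1]$ by admissibility and is lower semi-continuous on the compact interval (using the norm-to-weak-star upper semi-continuity of each $h(t)$ together with the continuity of the homotopy in $t$), hence attains a positive infimum $r > 0$. Choosing $0 < \epsilon < r/\max_{z \in \partial D} \|z\|$ then gives $0 \nin {\rm cl}\, h^{\epsilon}(t, \partial D)$ uniformly in $t$, the $t$-uniform analogue of $(\ref{epsilon})$.

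Second, and this is the main obstacle, I would prove a uniform non-vanishing statement at the finite level: there exists $N$ such that $0 \nin \tilde{h}_{n,\epsilon_n}(t)(\partial D_n)$ for every $t \in [0,1]$ and $n \geq N$. Arguing by contradiction, suppose there are $n_k \to \infty$, $t_k \in [0,1]$, and $z_k \in \partial D_{n_k}$ with $\tilde{h}_{n_k,\epsilon_{n_k}}(t_k)(z_k) = 0$. Passing to subsequences, $t_k \to t^*$ and $z_k \rightharpoonup z$. Following the pattern of the first lemma of Section 2, choose $\zeta_k \in Y_{n_k}$ converging in norm to $z$, pair both sides with $z_k - \zeta_k$, and exploit the $(S)_+$ property of $J$ to conclude $z_k \to z$ strongly, hence $z \in \partial D$. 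Then test against an arbitrary $y \in Y$ approximated by $y_n \in Y_n$, select $z^*_k \in h(t_k)(x_k)$ with $\|x_k - z_k\| \to 0$, and use the joint upper semi-continuity of $h$ (norm-to-weak-star in the spatial variable and continuity in $t$) to deduce $z^*_k \rightharpoonup z^*$ for some $z^* \in h(t^*)(z)$ satisfying $z^* + \epsilon J \circ i(z) = 0$. This contradicts the step-one uniform bound.

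Finally, for each fixed $n \geq N$ the family $t \mapsto \tilde{h}_{n,\epsilon_n}(t,\cdot)$ is a continuous finite-dimensional homotopy that does not vanish on $\partial D_n$, so Brouwer's homotopy invariance yields $\deg_B(\tilde{h}_{n,\epsilon_n}(0,\cdot), D_n, 0) = \deg_B(\tilde{h}_{n,\epsilon_n}(1,\cdot), D_n, 0)$; passing to the limit $n \to \infty$ via $(\ref{mmdeg})$ yields $\deg(h(0), D, 0) = \deg(h(1), D, 0)$. The delicate point lies entirely in step two, because the monotone operator itself now varies with $k$: the standard monotonicity inequality $\langle z^*_k - z^*, i(x_k - z)\rangle \geq 0$ used in the fixed-$T$ argument is no longer directly available, and one must use the joint continuity of the homotopy to compare $h(t_k)$ with $h(t^*)$ and transfer the monotonicity estimate in the limit.
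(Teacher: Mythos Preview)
Your proposal is correct and follows the same strategy as the paper: apply Brouwer's homotopy invariance to the finite-rank family $\tilde{h}_{n,\epsilon_n}(t)$ on each $D_n$, then invoke the stability in $n$ to pass to the limit. The paper's own proof is far terser---it simply asserts the key fact $0 \nin \tilde{h}_{n,\epsilon_n}(t)(\partial D_n)$ for all $t\in[0,1]$ without supplying the uniform-in-$t$ argument (your second step) or addressing the varying-operator subtlety you flag, so your version is the more complete one.
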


\begin{proof}
  According to the definition of the admissible homotopy, the degree \[\deg_B (
  \tilde{h}_{n, \epsilon_n} ( t), D_n, 0)\] is independent of $t$ due to the fact $0 \nin
  \tilde{h}_{n, \epsilon_n} ( t) ( \partial D_n)$ for $t \in [ 0, 1]$ and the
  homotopy invariance of the Brouwer's degree. Now,
  the stability of the defined degree $\left( \ref{mmdeg} \right)$ with
  respect to $n$ implies that the degree $\deg ( h ( t), D, 0)$ is independent
  of $t$.
\end{proof}

\section{Degree theoretic proofs}

We give degree theoretic proofs of some theorems in convex analysis. The first
theorem is due to D. DeFigueirdo \cite{Figuerido1971}.

\begin{theorem}
  Assume that $X$ is a separable uniformly convex Banach space, $T : X
  \rightarrow 2^{X^{\ast}}$ is a maximal monotone map such that $0 \nin (T +
  \lambda J) (S_r)$, where $S_r$ is the sphere of radius $r$ and $\lambda > 0$
  is arbitrary. Then there exists $u \in {\rm cl} ( B_r)$ such that $0 \in T
  ( u)$, where $B_r$ is the ball of radius $r$ in $X$.
\end{theorem}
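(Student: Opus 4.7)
The plan is to take $Y = X$ (permissible because $X$ is separable and uniformly convex, so the construction of Section 2 applies with the identity embedding) and deploy the degree \eqref{mmdeg}, splitting the argument according to whether $0 \in {\rm cl}\,T(S_r)$.

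If $0 \in {\rm cl}\,T(S_r)$, pick $u_n \in S_r$ and $v_n^\ast \in T(u_n)$ with $v_n^\ast \to 0$. By reflexivity a subsequence satisfies $u_n \rightharpoonup u$ with $\|u\| \leq r$. For every $(y, y^\ast) \in {\rm graph}(T)$, monotonicity gives $\langle y^\ast - v_n^\ast, y - u_n \rangle \geq 0$; passing to the limit (with $v_n^\ast \to 0$ strongly) yields $\langle y^\ast, y - u \rangle \geq 0$, so maximal monotonicity provides $0 \in T(u)$ with $u \in {\rm cl}(B_r)$ and we are done.

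Otherwise $0 \nin {\rm cl}\,T(S_r)$ and $\deg(T, B_r, 0)$ is well defined. Fix $\lambda_0 > 0$ and chain two admissible homotopies. The first is $h_1(t) = T + t \lambda_0 J$, $t \in [0, 1]$: at $t = 0$ the current case hypothesis forbids a boundary zero, while for $t > 0$ the hypothesis of the theorem (with $\lambda = t \lambda_0 > 0$) does. To rule out zeros in ${\rm cl}\,h_1([0, 1] \times S_r)$, I would suppose $t_n \to t_0$, $u_n \in S_r$, $w_n^\ast = v_n^\ast + t_n \lambda_0 J(u_n) \to 0$ with $v_n^\ast \in T(u_n)$. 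If $t_0 = 0$, boundedness of $J(u_n)$ forces $v_n^\ast \to 0$, contradicting the current case. If $t_0 > 0$, extracting $u_n \rightharpoonup u$ and using monotonicity against any $u^\ast \in T(u)$ gives $\liminf \langle v_n^\ast, u_n - u \rangle \geq 0$, whence $\limsup \langle J(u_n), u_n - u \rangle \leq 0$; the $(S)_+$ property of $J$ upgrades this to $u_n \to u$ in norm, and continuity of $J$ together with the norm to weak-star upper semicontinuity of $T$ then produces $0 \in (T + t_0 \lambda_0 J)(u)$ on $S_r$, contradicting the hypothesis. Therefore $\deg(T, B_r, 0) = \deg(T + \lambda_0 J, B_r, 0)$. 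The second homotopy $h_2(s) = s T + \lambda_0 J$, $s \in [0, 1]$, is handled by the same pattern: at $s = 0$ injectivity of $J$ forbids a zero on $S_r$, and for $s > 0$ a zero would give $0 \in (T + (\lambda_0/s) J)(u)$, again excluded. Hence $\deg(T + \lambda_0 J, B_r, 0) = \deg(\lambda_0 J, B_r, 0)$.

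The last step is $\deg(\lambda_0 J, B_r, 0) = 1$ directly from \eqref{mmdeg}. In each $Y_n$ the finite rank approximation of $\lambda_0 J$ is $\lambda_0 J_n$, which is single valued and continuous, and $((\lambda_0 + \epsilon) J_n(y), y) = (\lambda_0 + \epsilon) \|y\|_X^2 > 0$ for $y \in Y_n \setminus \{0\}$. The linear Brouwer homotopy $(\tau, y) \mapsto (1 - \tau)(\lambda_0 + \epsilon) J_n(y) + \tau y$ preserves this positivity on $S_r \cap Y_n$, so $\deg_B = 1$ and \eqref{mmdeg} gives $\deg(\lambda_0 J, B_r, 0) = 1$. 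Combining, $\deg(T, B_r, 0) = 1 \neq 0$, and the solvability theorem produces $u \in B_r$ with $0 \in T(u)$. The principal obstacle is the uniform admissibility of the two homotopies: a boundary zero could arise only as a limit $(t_n, u_n) \to (t_0, u)$, so the key work is combining monotonicity, the $(S)_+$ property of $J$, and the norm to weak-star upper semicontinuity of $T$ to upgrade weak convergence of $u_n$ to strong convergence before the hypothesis of the theorem can be invoked.
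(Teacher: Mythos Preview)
Your proof is correct and follows essentially the same degree-theoretic strategy as the paper: handle the possibility $0\in{\rm cl}\,T(S_r)$ by a direct maximal-monotonicity argument, then homotope $T$ to the duality map and invoke $\deg(J,B_r,0)=1$ together with solvability. The only cosmetic difference is that the paper uses a single affine homotopy $(1-t)T+tJ$ (so that the boundary condition for $t\in(0,1)$ reduces, after dividing by $1-t$, to exactly the hypothesis $0\notin(T+\lambda J)(S_r)$ with $\lambda=t/(1-t)$), whereas you reach $\lambda_0 J$ via the two-step path $T\to T+\lambda_0 J\to \lambda_0 J$; the verification of admissibility and the use of the $(S)_+$ property of $J$ are otherwise identical.
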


\begin{proof}
  Assume that $0 \nin T ({\rm cl} ( B_r))$. We show first that $0 \nin
  {\rm cl} T (S_r)$. Otherwise there exist a sequence $u_n \in S_r$ and $u^{\ast}_n \in
  T ( u_n)$ such that $u^{\ast}_n \rightarrow 0$. The sequence $( u_n)$
  converges weakly in a subsequence (that we show again by $u_n$) to some $u
  \in {\rm cl} ( B_r) .$ Claim: $[u, 0] \in {\rm graph} (T)$. For any $[x,
  x^{\ast}] \in {\rm graph} (T)$ we have the inequality
  \[ \langle x^{\ast}, x - u \rangle = \lim \langle x^{\ast} - u^{\ast}_n, x -
     u_n \rangle \geq 0. \]
  Since $T$ is maximal monotone, the above inequality implies $[x, 0] \in
  {\rm graph} (T)$ or equivalently $0 \in T ( u)$. This contradicts the
  assumption $0 \nin T ({\rm cl} ( B_r))$. It is also apparent that $0 \nin {\rm cl} ( J
  (S_r))$. Next, we show $0 \nin {\rm cl} ( (1 - t) T + t J) (S_r))$ for $t \in (0, 1)$. Otherwise there exist $t_n \in ( 0, 1)$,
  $u_n \in S_r$ and $u^{\ast}_n \in T ( u_n)$ such that
  \[ (1 - t_n) u^{\ast}_n + t_n \nocomma J ( u_n) \rightarrow 0. \]
  Again for $u_n \rightharpoonup u$ and $t_n \rightarrow t$ we obtain by the
  monotonicity property of $T$ the following inequality
  \[ \limsup_{n \rightarrow \infty} \langle J ( u_n), u_n - u \rangle \leq 0,
  \]
  that implies $u_n \rightarrow u \in S_r$. Claim: $[u, \frac{- t}{1 - t} J (
  u)] \in {\rm graph} (T)$. For any $[x, x^{\ast}] \in {\rm graph} (T)$ we
  obtain by the fact \ $\frac{- t}{1 - t} J ( u_n) \in T ( u_n)$ the following
  relation
  \[ \langle x^{\ast} + \frac{t}{1 - t} J ( u), x - u \rangle = \lim \langle
     x^{\ast} + \frac{t}{1 - t} J ( u_n), x - u_n \rangle \geq 0, \]
  that proves the claim. Now by degree theoretic argument we have
  \[ \deg (T, B_r, 0) = \deg ((1 - t) T + tJ, B_r, 0) = \deg (J, B_r, 0) = 1.
  \]
  The above calculation guarantees the existence of $u \in B_r$ such that $0
  \in T ( u)$ and this contradicts the assumption $0 \nin T ({\rm cl} (
  B_r))$. Therefore the assumption $0 \nin T ({\rm cl} ( B_r))$ is
  wrong and thus $0 \in T ({\rm cl} ( B_r))$.
\end{proof}

The next theorem is again from DeFigueirdo {\cite{Figuerido1971}}.

\begin{proposition}
  Let $X$ be a separable uniformly convex Banach space and assume that $f : X \rightarrow
  X^{\ast}$ is a pseudo-monotone map. Then ${\rm Rang} (\partial N_r + f) =
  X^{\ast}$ where $N_r$ is the map
  \[ N_r (x) = \left\{ \begin{array}{l}
       0 \hspace{1cm} {\rm if\  } x \in B_r\\
       1 \hspace{1cm} {\rm if\ } x \in S_r
     \end{array} \right., \]
  and $\partial N_r$ is the set of the sub-gradients of $N_r$.
\end{proposition}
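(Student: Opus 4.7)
The plan is to apply the degree for maximal monotone maps developed in the previous section, combined with a homotopy argument, to reduce the solvability of $g \in (f+\partial N_r)(u)$ to a reference degree computation. Recall that $\partial N_r(x) = \{0\}$ for $x$ in the interior of $B_r$ and $\partial N_r(x) = \{\mu J(x) : \mu \geq 0\}$ for $x \in S_r$. Fix $g \in X^*$, set $T = f + \partial N_r - g$, and consider on ${\rm cl}(B_r)$ the homotopy
\[ h(t)(x) = tf(x) + (1-t)J(x) + \partial N_r(x) - tg, \qquad t \in [0,1]. \]

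First I would compute the reference degree at $t = 0$. The map $h(0) = J + \partial N_r$ has degree $1$ on $B_r$: the secondary homotopy $J + s\,\partial N_r$, $s \in [0,1]$, is admissible on $S_r$, since $\partial N_r(u) = \{\mu J(u) : \mu \geq 0\}$ on $S_r$ forces $(J + s\,\partial N_r)(u) = \{(1 + s\mu) J(u) : \mu \geq 0\}$, and $1 + s\mu \geq 1$ together with $J(u) \neq 0$ prevents $0$ from appearing. Hence $\deg(J + \partial N_r, B_r, 0) = \deg(J, B_r, 0) = 1$.

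The key step is to verify admissibility of the homotopy $h(t)$ on $S_r$, or to extract a boundary solution directly. Suppose $u \in S_r$ and $0 \in h(t)(u)$ for some $t \in (0,1]$. Then there exists $\mu \geq 0$ with
\[ t(f(u) - g) + (1 - t + \mu) J(u) = 0, \]
so $g - f(u) = \nu J(u)$ with $\nu = (1 - t + \mu)/t \geq 0$. Thus $g - f(u) \in \partial N_r(u)$, and $u$ itself satisfies the desired inclusion $g \in (f+\partial N_r)(u)$. Admissibility at $t = 0$ follows from the argument above. Hence either a boundary zero yields the solution directly, or the homotopy is admissible throughout $[0,1]$ and invariance gives $\deg(T, B_r, 0) = 1 \neq 0$, after which the solvability theorem produces $u \in B_r$ with $g \in (f + \partial N_r)(u)$.

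The principal obstacle is compatibility with the degree framework of the paper, which is developed for maximal monotone maps with effective domain equal to all of $Y$: the normal cone $\partial N_r$ has effective domain ${\rm cl}(B_r)$ only, and $f$ is assumed merely pseudo-monotone, not maximal monotone. I would address this by a double regularization, replacing $\partial N_r$ by its Yosida approximation $(\partial N_r)_\delta$, which is maximal monotone with full domain, and replacing $f$ by the strongly monotone perturbation $f + \eta J$; the homotopy argument above then applies directly to the regularized problem. Passage to the limit $\delta, \eta \to 0$ is handled by exploiting the $(S)_+$-property of $J$ together with the pseudo-monotonicity of $f$ to secure strong convergence of the approximating solutions and preserve the inclusion in the limit.
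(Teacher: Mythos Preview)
Your overall strategy coincides with the paper's: connect $f+\partial N_r-g$ to $J$ via an affine homotopy and read off degree $1$. The paper uses $h(t)=(1-t)(\partial N_r+f-f_0)+tJ$ and argues by contradiction, assuming $0\notin(\partial N_r+f-f_0)(\mathrm{cl}(B_r))$ and deriving a contradiction from the degree computation.

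The genuine gap is your admissibility check. The degree in this paper is defined only when $0\notin\mathrm{cl}\,T(\partial D)$, i.e.\ when $0$ avoids the \emph{closure} of the boundary image, not merely the image itself. Your dichotomy (``either some $u\in S_r$ satisfies $0\in h(t)(u)$ and is then itself a solution, or the homotopy is admissible'') handles only the pointwise condition $0\notin h(t)(S_r)$. The paper closes precisely this hole by passing to sequences $u_n\in S_r$, $t_n\to t$, $u_n^{\ast}\in\partial N_r(u_n)$ with $(1-t_n)(u_n^{\ast}+f(u_n)-f_0)+t_nJ(u_n)\to 0$, and this is where pseudo-monotonicity of $f$ is actually used: from $\limsup\langle f(u_n),u_n-u\rangle\le 0$ one obtains $f(u_n)\rightharpoonup f(u)$, and then maximal monotonicity of $\partial N_r$ forces $f_0-f(u)-\tfrac{t}{1-t}J(u)\in\partial N_r(u)$, producing a solution in $\mathrm{cl}(B_r)$ and a contradiction. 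Your argument never invokes pseudo-monotonicity except vaguely in the final limiting step, so the closure requirement remains unverified. The Yosida/$\eta J$ regularization you propose is a reasonable response to the framework mismatch you correctly flag (the paper simply applies its degree without addressing that $\partial N_r$ lacks full effective domain and $f$ is not monotone), but regularization alone does not supply the missing closure argument, and the paper manages without it.
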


\begin{proof}
  Apparently, we have
  \begin{equation}
    \partial N_r (x) = \left\{ \begin{array}{ll}
      0 & {\rm if\ }x \in B_r\\
      \{\lambda J (x), \lambda \geq 0\} & {\rm if\ }x \in S_R
    \end{array} \right. .
  \end{equation}
  Claim: for every $f_0 \in X^{^{\ast}}$, we have
  \begin{equation}
    \deg (\partial N_r + f - f_0, B_r, 0) \neq 0.
  \end{equation}
  First we show if $0 \nin (\partial N_r + f - f_0) ({\rm cl} ( B_r))$ then
  \begin{equation}
    0 \nin {\rm cl} (\partial N_r + f - f_0) (S_r) .
  \end{equation}
  Otherwise, there is a sequence $u_n \in S_r$ and $u^{\ast}_n \in \partial N_r (
  u_n)$ such that $u^{\ast}_n + f ( u_n) - f_0 \rightarrow 0$. But $u_n
  \rightharpoonup u \in {\rm cl} ( B_r)$ in a sub-sequence. We prove that
  $[u, f_0 - f ( u)] \in {\rm graph} (\partial N_r)$. Let $f_0 = u^{\ast}_n +
  f ( u_n) + \epsilon ( n)$ where $\epsilon ( n) \in X^{\ast}$ and
  $\epsilon ( n) \rightarrow 0$. For any arbitrary $[x, x^{\ast}] \in
  {\rm graph} (\partial N_r)$, we have
  \begin{align*}
  \langle x^{\ast} - f_0 + f ( u), x - u \rangle = \lim \langle x^{\ast} +
     f ( u) - u^{\ast}_n - f ( u_n), x - u_n \rangle  \geq \\ \lim \langle f ( u)
     - f ( u_n), x - u \rangle . 
   \end{align*}
  But
  \begin{equation}
   0 = \lim \langle u^{\ast}_n + f ( u_n) - f_0, u_n - u \rangle \geq \limsup
    \langle f ( u_n), u_n - u \rangle  
  \end{equation}
  Since $f$ is pseudo-monotone we obtain $f ( u_n) \rightharpoonup f ( u)$ and
  therefore
  \[ \langle x^{\ast} - f_0 + f ( u), x - u \rangle \geq 0. \]
  This implies that $[u, f_0 - f ( u)] \in {\rm graph} (\partial N_r)$ and
  thus $0 \in (\partial N_r + f - f_0) ({\rm cl} ( B_r))$ which is impossible
  by the assumption. Now consider the affine homotopy
  \begin{equation}
    h ( t) = (1 - t) (\partial N_r + f - f_0) + t \nocomma J.
  \end{equation}
  for $t \in (0, 1]$. We show
  \[ 0 \nin {\rm cl} ((1 - t) (\partial N_r + f - f_0) + t \nocomma J) (S_r)
     . \]
  Otherwise, there is a sequence $u_n \in S_r$, $u^{\ast}_n \in \partial N_r ( u_n)$
  and $t_n \rightarrow t$ such that
  \[ (1 - t_n) (u^{\ast}_n + f ( u_n) - w) + t_n Ju_n \rightarrow 0. \]
  But $u_n \rightharpoonup u \in {\rm cl} ( B_r)$ in a subsequence. W show
  \[ [u, f_0 - f ( u) - \frac{t}{1 - t} J ( u)] \in {\rm graph} (\partial
     N_r) . \]
  For any $[x, x^{\ast}] \in {\rm graph} (\partial N_r)$ we have
  \begin{eqnarray*}
    \langle x^{\ast} - f_0 + f ( u) + \frac{t}{1 - t} J ( u), x - u \rangle =
    &  & \\
    \lim \langle x^{\ast} + f ( u) + \frac{t}{1 - t} J ( u) - u^{\ast}_n - f (
    u_n) - \frac{t_n}{1 - t_n} J ( u_n), x - u \rangle \geq &  & \\
    \geq \limsup \langle f ( u) - f ( u_n), x - u \rangle + \liminf \langle
    \frac{t}{1 - t} J ( u) - \frac{t_n}{1 - t_n} J ( u_n), x - u \rangle &  & 
  \end{eqnarray*}
  We conclude $u_n \rightarrow u \in S_r$ and $f ( u_n) \rightharpoonup f (
  u)$ because $f$ is pseudo-monotone. Therefore we obtain again
  \[ 0 \in (\partial N_r + f + \frac{t}{1 - t} J - f_0) ({\rm cl} ( B_r)) .
  \]
  But $(\partial N_r + f + \frac{t}{1 - t} J - f_0) ({\rm cl} ( B_r)) =
  (\partial N_r + f - f_0) ({\rm cl} ( B_r))$ and then $0 \in (\partial N_R +
  f - f_0) ({\rm cl} ( B_r))$ that is impossible. Finally we use the homotopy
  invariance property of degree and write
  \[ \deg (\partial N_R + f - f_0, B_r, 0) = \deg (h ( t), B_R, 0) = \deg (J,
     B_r, 0) = 1. \]
  Therefore there exist $u \in {\rm cl} ( B_r)$ such that $f_0 \in \partial
  N_r ( u) + f ( u)$.
\end{proof}

The following theorem is due to F. Browder {\cite{Browder1976}} for the
surjectivity of the monotone maps with locally bounded inverse.

\begin{theorem}
  Assume $A : X \rightarrow X^{\ast}$ is a demi-continuous monotone map such
  that $A^{- 1}$ is locally bounded, that is, for every $f \in X^{\ast}$ there
  is a bounded $V_f \ni f$ such that $A^{- 1} (V_f)$ is bounded. Then $A$
  is onto.
\end{theorem}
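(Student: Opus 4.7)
The plan is to reduce the surjectivity of $A$ to that of the duality map $J$ via two admissible homotopies and then invoke the solvability property of the degree. First, I would note that a demi-continuous monotone map $A : X \to X^{\ast}$ is automatically maximal monotone (a standard fact, cf.\ Barbu), so $A$ fits the hypotheses of the degree constructed in Section~2. Fixing an arbitrary $f_0 \in X^{\ast}$, the goal becomes to show $\deg(A - f_0, B_R, 0) \neq 0$ on a sufficiently large ball.

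The choice of $R$ will exploit the local boundedness hypothesis. Consider the compact line segment $f_t := (1-t) A(0) + t f_0$ in $X^{\ast}$, $t \in [0,1]$. For each $t$ pick a neighborhood $V_t$ of $f_t$ with $A^{-1}(V_t)$ bounded, extract a finite subcover $V_{t_1},\dots,V_{t_k}$, and let $M$ bound the union of the corresponding preimages. Choose $R > M$. Then the homotopy $h_t := A - f_t$ is admissible on $B_R$: if $u_n \in \partial B_R$ and $A(u_n) - f_{t_n} \to 0$ for some $t_n \in [0,1]$, then after passing to a subsequence with $t_n \to t$ we have $A(u_n) \to f_t$, so eventually $u_n \in A^{-1}(V_{t_i}) \subset B(0,M)$, contradicting $\|u_n\| = R > M$.

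Next, I would compute $\deg(A - A(0), B_R, 0)$ by the convex homotopy $g_s := s(A - A(0)) + (1-s) J$, $s \in [0,1]$, interpolating between $J$ and $A - A(0)$. Pairing with $u$ gives
\[
\langle g_s(u), u \rangle \;=\; (1-s)\|u\|^2 + s \langle A(u) - A(0),\, u \rangle,
\]
a sum of nonnegative terms by monotonicity of $A$. If $g_{s_n}(u_n) \to 0$ with $\|u_n\| = R$, then $\langle g_{s_n}(u_n), u_n \rangle \to 0$ forces $(1 - s_n) R^2 \to 0$ and hence $s_n \to 1$; consequently $(1 - s_n) J(u_n) \to 0$ (since $\|J(u_n)\| = R$), which yields $A(u_n) \to A(0)$ and again contradicts $R > M$ via local boundedness at $A(0)$. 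Thus $g$ is admissible, and homotopy invariance combined with the identity $\deg(J, B_R, 0) = 1$ used elsewhere in the paper gives $\deg(A - A(0), B_R, 0) = 1$. Chaining with the first homotopy yields $\deg(A - f_0, B_R, 0) = 1$, and the solvability theorem produces $u \in B_R$ with $A(u) = f_0$; since $f_0$ was arbitrary, $A$ is onto.

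The main obstacle is verifying the closure condition $0 \not\in {\rm cl}\, h([0,1] \times \partial B_R)$ required by the admissible-homotopy definition, rather than just the pointwise absence of zeros on the boundary. Demi-continuity alone does not control norm limits of $A(u_n)$; the local boundedness of $A^{-1}$ is precisely what turns a tentative norm-convergence $A(u_n) \to f$ into the bound $\|u_n\| \leq M$, and this same tool powers both homotopy verifications.
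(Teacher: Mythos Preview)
Your argument is correct and shares the paper's overall strategy---reduce to $\deg(J,B_R,0)=1$ via admissible homotopies---but the execution differs. The paper first perturbs by $\epsilon J$, invoking the identity $\deg(A,B_r,f)=\deg(A+\epsilon J,B_r,f)$ that comes for free from the degree construction, and then runs the single homotopy $h(t)=tA+\epsilon J$; the boundary check is the monotonicity estimate $\epsilon\|z\|\le\|A(0)\|$, and the case of a general target $f$ is only sketched as ``similar.'' You instead avoid the $\epsilon J$ regularization entirely and split the work into (i) a translation homotopy along the segment $[A(0),f_0]$, with the radius $R$ chosen by a finite-subcover argument over that segment so that local boundedness of $A^{-1}$ applies uniformly, and (ii) the convex homotopy $s(A-A(0))+(1-s)J$. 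Your version is more explicit about how the local-boundedness hypothesis handles $f_0\neq 0$ and does not lean on the $\epsilon$-stability of the degree; the paper's version is marginally shorter once that stability is taken for granted.
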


\begin{proof}
  For any $f \in X^{\ast}$, we show that there is sufficiently large $r = r
  ( f)$ such that:
  \[ \deg (A, B_r, f) \neq 0. \]
  Choose $r > 0$ such that for a neighborhood $V_f \ni f$ the following condition is satisfied
  \[ S_r \cap A^{- 1} (V_f) = \emptyset, \]
  or equivalently $f \nin {\rm cl} A (S_r)$. Since there is $\epsilon
  > 0$ such that
  \[ \deg (A, B_r, f) = \deg (A + \epsilon J, B_{r_{}}, f), \]
  it is enough to show 
  \begin{equation}
    \label{G-AdeltaJ} \deg (A + \epsilon J, B_r, f) \neq 0,
  \end{equation}
  for sufficiently large $r$ and sufficiently small $\epsilon > 0$. First, we show
  \[ \deg (A + \epsilon J, B_r, 0) \neq 0. \]
  In fact, if $( A + \epsilon J) ( z) = 0$ for $z \in \partial B_r$, then
  \[ \langle A ( z) - A ( 0), z \rangle + \epsilon \| z \|^2 + \langle A (
     0), z \rangle = 0. \]
  Since $A$ is monotone, the inequality $\epsilon \| z \|^2 + \langle A (
  0), z \rangle \leq 0$ implies $\epsilon \| z \| \leq \| A ( 0) \|$, that
  is impossible for sufficiently large $r$. Since $A + \epsilon J$ is a map
  of class $( S)_+$, define the homotopy $h ( t) = t A + \epsilon J$. It is
  simply seen that $0 \nin h ( t) ( \partial B_r)$ and then
  \[ \deg ( A + \epsilon J, B_r, 0) = \deg ( h ( t), B_r, 0) = \deg ( J,
     B_r, 0) \neq 0. \]
  The proof of (\ref{G-AdeltaJ}) is completely similar to one presented above.
\end{proof}

\end{document}